\title{Connectedness of random set attractors}
\author{
	Michael Scheutzow\thanks{
	Institut f\" ur Mathematik, MA 7-5, Fakult\" at II, Technische Universit\" at Berlin, 
	Stra\ss e des 17. Juni 136, \newline 10623 Berlin, Germany; 
	e-mail: \texttt{ms@math.tu-berlin.de}, \texttt{vorkastn@math.tu-berlin.de}  } 
	\and
	Isabell Vorkastner\footnotemark[1] }
\date{\today}
\begin{document}

\maketitle

\theoremstyle{plain}
\newtheorem{theorem}{Theorem}[section]
\newtheorem{lemma}[theorem]{Lemma}
\newtheorem{proposition}[theorem]{Proposition}
\newtheorem{corollary}[theorem]{Corollary}
\theoremstyle{definition}
\newtheorem{definition}[theorem]{Definition}
\newtheorem{example}[theorem]{Example}
\newtheorem{remark}[theorem]{Remark}

\abstract{
	We examine the question whether random set attractors for continuous-time random dynamical systems 
	on a connected state space are connected. In the deterministic case, these attractors are known to
	be connected. In the probabilistic setup, however, connectedness has only been shown under stronger
	connectedness assumptions on the state space. 
	Under a weak continuity condition on the random dynamical system  
	we prove connectedness of the pullback attractor on a connected space. 
	Additionally, we provide an example of a weak  random set attractor of a random dynamical system with even 
        more restrictive continuity assumptions on an even path-connected space
	which even  attracts all bounded sets and which is not connected. On the way to proving connectedness of a pullback attractor 
        we prove a lemma which may be of independent interest and which holds without the assumption that the state space is connected. 
        It states that even though pullback convergence to the attractor allows for exceptional nullsets which may depend on the compact set, 
        these nullsets can be chosen independently of the compact set (which is clear for $\sigma$-compact spaces but not at all clear for 
        spaces which are not $\sigma$-compact).}
\\ \\
	\noindent
	\textbf{Keywords:} random dynamical system, pullback attractor, weak attractor, pullback continuity, measurable selection\\
	\noindent
	\textbf{2010 Mathematics Subject Classification:} 37H99, 37B25, 37C70, 28B20

\section{Introduction}

While attractors for (deterministic) dynamical systems have been studied for a long time, attractors for {\em random dynamical systems} were 
only introduced and studied in the nineties of the last century. The question of connectedness of a random pullback attractor was first 
addressed in the seminal paper \cite{Crauel1994}. Proposition 3.13 of that paper states that if a random dynamical system in discrete or 
continuous time taking values in a connected Polish space admits a pullback attractor $A$ (in the sense that $A$ attracts every bounded set in the pullback sense almost surely) then  $A$ is almost surely connected. Later, a gap was found in the proof of that proposition and an example in  
\cite{Gobbino1997} shows that the claim does not even hold  true in the deterministic case when time is discrete.
Positive results (in discrete and continuous time) have been found in \cite{Crauel2003} under the additional condition that  any compact
set in the state space can be covered by a connected compact set (a property which clearly does not hold in the example in \cite{Gobbino1997}).
\\
The aim of this paper is to examine the question whether random set attractors of continuous-time random dynamical systems on a 
connected state space are connected. 
\\
In this paper, we distinguish between two kinds of random set attractors, pullback and weak attractors (precise definitions will be provided in the next section). By  {\em set} attractor we mean 
an attractor which either attracts every deterministic compact set or every deterministic bounded set (we will state explicitly in each case if we want the attractor to attract every compact or even every bounded set). Pullback and weak attractors 
differ in the type of convergence of compact (or bounded) sets under the action of the random dynamical system to the attractor. {\em Pullback} stands for almost sure convergence and {\em weak} for convergence in probability.
Both of these set attractors are known to be (almost surely) unique, see \cite[Lemma 1.3]{Gess2016}. 
\\
In Section \ref{pullback_section}, we consider pullback attractors for continuous-time random dynamical systems taking values in a 
connected Polish space. 
Under a rather weak continuity assumption on the random dynamical system which we call {\em pullback continuity}
we show that the pullback attractor (if it exists) is almost surely connected (even if it is only required to attract all compact sets). 
The first lemma in that section may be of independent interest.  It states that even though pullback convergence to the attractor allows for exceptional nullsets which may depend on the compact set, 
these nullsets can be chosen independently of the compact set (even if the space is not $\sigma$-compact). This lemma does not assume the state space to be connected. The result allows us to argue pathwise (for fixed $\omega$) in the proof of the main result.
\\
In Section \ref{weak_attractor_section} we provide an example of a random dynamical system on a path-connected state space where the weak
attractor is not connected. In that example the random dynamical system enjoys even stronger continuity properties than in the previous section and the attractor even attracts all bounded and not just compact sets. The state space in that example is the same as that in \cite{Gobbino1997} but the random dynamical system on that space is more sophisticated.
\\
Apart from set attractors for continuous-time system other types of random attractors such as {\em random  point attractors} or {\em random Hausdorff-Delta-attractors} have been studied in the literature 
either in the pullback or weak sense (\cite{Crauel2003}, \cite{SMWB2017}). These are generally not connected even if the ambient space is connected and the attractors are chosen to be {\em minimal} 
(unlike set attractors they are generally not unique).  
As an example for a disconnected minimal  point attractor consider the scalar differential
equation $\mathrm{d} x = (x - x^3) \, \mathrm{d} t$ on the interval $[0,1]$. Each trajectory converges 
to $\left\{0 \right\} $ or $ \left\{ 1 \right\}$. Hence, $\left\{0 \right\} \cup \left\{ 1 \right\}$ is
the minimal (pullback or weak) point attractor (while the set attractor is the whole interval $[0,1]$).

\section{Notation and preliminaries}
Let $(X,d)$ be a Polish (i.e.~separable complete metric) space with Borel $\sigma$-algebra $\mathcal{B}(X)$
and $(\Omega, \mathcal{F}, \mathbb{P}, \theta)$ be a metric dynamical system, i.e.
$(\Omega, \mathcal{F},\mathbb{P})$ is a probability space and $(\theta_t)_{t\in \mathbb{R}}$ a group of
jointly measurable maps on $(\Omega, \mathcal{F},\mathbb{P})$ such that $\theta_0=\mathrm{id}$ with invariant measure $\mathbb{P}$.
Denote by $\bar{\mathcal{F}}$ the completion of $\mathcal{F}$ with respect to $\mathbb{P}$.
We further denote by $\bar{\mathbb{P}}$ the (unique) extension of $\mathbb{P}$ to $\bar{\mathcal{F}}$. 
\\
Let $\varphi : \mathbb{R}_+ \times \Omega \times X \rightarrow X$ be jointly measurable, $\varphi_0 (\omega,x) =x$, 
$\varphi_{s+t} (\omega,x) = \varphi_t (\theta_s \omega, \varphi_s (\omega,x))$ for all $x \in X$,
and  $ x \mapsto \varphi_t (\omega, x)$ continuous,
$s,t \in \mathbb{R}_+$ and $\omega \in \Omega$. 
Then, $\varphi$ is called a \emph{cocycle} and the collection 
$(\Omega, \mathcal{F}, \mathbb{P}, \theta,\varphi)$ is called a \emph{random dynamical system (RDS)},
see \cite{Arnold1988} for a comprehensive treatment. We call an RDS \emph{pullback continuous} if 
$t \mapsto \varphi_{t}(\theta_{-t}\omega,x)$ is continuous for each $\omega \in \Omega$ and $x \in X$.
\\
A \emph{semi-flow} $\phi:\left\{ - \infty < s \leq t < \infty \right\} \times \Omega \times X \rightarrow X$ 
satisfies $\phi_{s,u}(\omega,x) = \phi_{t,u}(\omega,\cdot) \circ \phi_{s,t}(\omega,x)$,
$\phi_{s,t}(\omega,x) = \phi_{s+h,t+h}(\theta_h \omega,x)$ and $\phi_{s,s}(\omega,x) =x$ for $\omega \in \Omega$,
$x \in X$, $ h \in \mathbb{R}$ and $- \infty <s \leq t \leq u <\infty$.
There is a one-to-one relation between cocycles and semi-flows. 
One can either define a semi-flow by $\phi_{s,t}(\omega,x):= \varphi_{t-s}(\theta_{s}\omega,x)$ or 
a cocycle by $\varphi_t(\omega,x) := \phi_{0,t}(\omega,x)$.
We say a semi-flow respectively RDS is \emph{jointly continuous} if $(s,t,x) \mapsto \phi_{s,t} (\omega,x)$ respectively $(s,t,x) \mapsto \varphi_{t-s}(\theta_{s}\omega,x)$ is continuous.  
Note that a jointly continuous RDS is pullback continuous but the converse does not necessarily hold true. 
\\
For a set $A \subset X$ we denote
\begin{align*}
	A^\varepsilon := \left\{ x \in X : d (x ,A): = \inf_{a \in A} d(x,a) < \varepsilon \right\}.
\end{align*}
\begin{definition}
	A family $\left\{ A (\omega ) \right\}_{\omega \in \Omega}$ of non-empty subsets of $X$
	is called 
	\begin{enumerate}
	\item[(i)] 
		a \emph{random compact set} if it is $\mathbb{P}$-almost surely a compact set and 
		$\omega \mapsto d(x,A(\omega))$ is $\mathcal{F}$-measurable 
		for each $x \in X$.
	\item[(ii)]
		\emph{$\varphi$-invariant} if $\varphi_t (\omega, A(\omega)) = A(\theta_t \omega )$ 
		for almost all $\omega \in \Omega$,\, $t \in \mathbb{R}_+$.
	\end{enumerate}
\end{definition}
\begin{definition}
	Let $(\Omega, \mathcal{F}, \mathbb{P}, \theta, \varphi)$ be a random dynamical system. 
	A random compact set $A$ is called a \emph{pullback attractor} if 
	it satisfies the following properties
	\begin{enumerate}
	\item [(i)] $A$ is $\varphi$-invariant
	\item [(ii)] for every compact set $B \subset X$ 
		\begin{align*}
			\lim_{t \rightarrow \infty} \sup_{x \in B} 
				d( \varphi_t (\theta_{-t}\omega ,x), A(\omega)) =0 
				\qquad  \mathbb{P}\textrm{-almost surely}.
		\end{align*}
	\end{enumerate}
If the convergence in (ii) is merely in probability, then $A$ is called a  \emph{weak attractor}.
\end{definition}

\section{Pullback attractor}
\label{pullback_section}

In this section, we show that the pullback attractor of a pullback continuous RDS 
on a connected space is connected.
The pullback attractor attracts any compact set almost surely. We prove that the nullsets where it may not converge can be be chosen independently of the compact set. This allows us to analyze the RDS pathwise 
and to use similar arguments as in the deterministic proof of \cite[Theorem 3.1]{Gobbino1997}.

\begin{lemma}
	\label{omega-wise_convergence}
	Let $A$ be the pullback attractor of the pullback continuous RDS $\varphi$.
	Then, there exists some $\hat{\Omega} \in \mathcal{F}$ with $\mathbb{P}(\hat{\Omega})=1$
	such that for any $\omega \in \hat{\Omega}$ and compact set $K \subset X$,
	\begin{align*}
		\lim_{t \rightarrow \infty} \sup_{x \in K} d(\varphi_t ( \theta_{-t} \omega, x), A(\omega)) = 0.
	\end{align*}
\end{lemma}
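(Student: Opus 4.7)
The plan is to reduce the ``for every compact $K$'' quantifier in the almost-sure statement to countably many nullsets by working on the Polish hyperspace $\mathcal{K}(X)$ of non-empty compact subsets of $X$ (under the Hausdorff metric) and then invoking measurable projection and measurable selection.

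As a first step I would form, for each rational $\varepsilon>0$ and $T\ge 0$, the set
\[
E_{\varepsilon,T}:=\Bigl\{(\omega,K)\in\Omega\times\mathcal{K}(X):\sup_{t\ge T}\sup_{x\in K}d(\varphi_t(\theta_{-t}\omega,x),A(\omega))\ge\varepsilon\Bigr\}.
\]
Pullback continuity permits restricting $\sup_{t\ge T}$ to rational $t$; continuity of $x\mapsto\varphi_t(\theta_{-t}\omega,x)$ together with a measurable Castaing-type enumeration of a countable dense subset of $K$ permits restricting $\sup_{x\in K}$ to a countable set; and random compactness of $A$ handles the distance $d(\cdot,A(\omega))$. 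Hence $E_{\varepsilon,T}$ is jointly measurable, and $F_\varepsilon:=\pi_\Omega\bigl(\bigcap_{T\in\mathbb{N}} E_{\varepsilon,T}\bigr)$ lies in $\bar{\mathcal F}$ by the measurable projection theorem, which applies since $\mathcal{K}(X)$ is Polish. The candidate exceptional set is the completion of $\bigcup_{\varepsilon\in\mathbb{Q}_+}F_\varepsilon$.

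The lemma thus reduces to showing $\bar{\mathbb{P}}(F_\varepsilon)=0$ for every rational $\varepsilon>0$. Arguing by contradiction, Kuratowski--Ryll-Nardzewski selection provides a measurable $\omega\mapsto K_\ast(\omega)\in\mathcal{K}(X)$ with $\limsup_{t\to\infty}\sup_{x\in K_\ast(\omega)}d(\varphi_t(\theta_{-t}\omega,x),A(\omega))\ge\varepsilon$ on a set of positive probability. To turn this into a contradiction with the attractor's defining property (which concerns only \emph{deterministic} compacts), I would split time via the cocycle, $\varphi_t(\theta_{-t}\omega,x)=\varphi_r(\theta_{-r}\omega,\varphi_{t-r}(\theta_{-t}\omega,x))$ for a fixed small $r>0$, and combine invariance $\varphi_r(\theta_{-r}\omega,A(\theta_{-r}\omega))=A(\omega)$, compactness of $A(\theta_{-r}\omega)$, and continuity of $\varphi_r(\theta_{-r}\omega,\cdot)$ to transport a failure at the random $K_\ast(\omega)$ to a failure from a deterministic compact set at the shifted time.

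The main obstacle is precisely this final transport step. In a $\sigma$-compact space the lemma is immediate from exhausting $X$ by countably many compact sets and taking a countable union of nullsets; in the general Polish case one must combine pullback continuity in $t$, continuity in $x$, compactness of the attractor, and measurable selection in a coordinated manner to extract a single deterministic compact set on which attraction fails with positive probability.
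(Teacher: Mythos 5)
Your overall architecture (a jointly measurable set of ``bad'' pairs $(\omega,K)$, measurable projection, measurable selection of a random compact set on which attraction fails, then transport to a deterministic compact set) parallels the paper's proof, which does the same thing with the Polish space of convergent sequences $\hat c\subset X^{\mathbb N}$ in place of the hyperspace $\mathcal K(X)$ and with Leese's selection theorem for $\bar{\mathcal F}\otimes\mathcal B$-measurable graphs in place of Kuratowski--Ryll-Nardzewski (note that KRN as such requires a closed-valued measurable multifunction, so for a graph obtained as above you would in any case need a von Neumann/Aumann-type selection theorem, as the paper uses). But the one step you explicitly flag as the ``main obstacle'' --- passing from a failure on the random compact $K_\ast(\omega)$ to a failure on a deterministic compact set --- is exactly the crux of the lemma, and the route you sketch for it does not work. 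Splitting time via the cocycle and using invariance $\varphi_r(\theta_{-r}\omega,A(\theta_{-r}\omega))=A(\omega)$ only relates the attractor at different times; it gives you no mechanism for replacing the $\omega$-dependent set $K_\ast(\omega)$ by a fixed compact set, which is the only kind of set the definition of the pullback attractor says anything about.

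The missing ingredient is tightness of random compact sets on a Polish space: for any measurable $\omega\mapsto K_\ast(\omega)\in\mathcal K(X)$ and any $\delta>0$ there is a single deterministic compact $\tilde K\subset X$ with $\mathbb{P}\bigl(K_\ast(\omega)\subset\tilde K\bigr)>1-\delta$ (this is \cite[Proposition 2.15]{Crauel2003}; it follows from inner regularity of the law of $K_\ast$ on the Polish hyperspace together with the fact that the union of a compact family of compact sets is compact). Choosing $\delta$ smaller than the probability of the bad event, you get a positive-probability set of $\omega$ on which attraction fails for $K_\ast(\omega)\subset\tilde K$, hence fails for $\tilde K$, contradicting the definition of the attractor applied to the deterministic set $\tilde K$. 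This is precisely how the paper closes the argument (applied there to the random compact set formed by the selected bad sequence and its limit). With that substitution your plan goes through; without it, the proof is incomplete at its decisive point.
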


\begin{proof}
	First, we consider convergent sequences in $X$. Let
	\begin{align*}
		\hat{c} := \left\{ ( x_\infty , x_1 , x_2, x_3, \dots ) \in X^\mathbb{N} : 
		 d(x_n,x_\infty) \leq \frac{1}{n} \textrm{ for all } n \in \mathbb{N} \right\}
	\end{align*}
	which is closed in the Polish space $X^\mathbb{N}$ and hence itself a Polish space. Further, let
	\begin{align*}
		M (\omega ) := \bigcup_{n \in \mathbb{N}} 
		\bigcap_{m \in \mathbb{N}} \; \bigcup_{q \in \mathbb{Q}, q \geq m}
		\; \bigcup_{k \in \mathbb{N} \cup \left\{\infty \right\} }
		\left\{ (x_\infty , x_1, x_2, \dots) \in \hat{c} : 
		\varphi_q (\theta_{-q} \omega, x_k) \in A(\omega) ^\frac{1}{n} \right\} ^c
	\end{align*}
	be the set of sequences of $\hat{c}$ that are not uniformly attracted.
	By measurability of $\varphi$ and $A$, the graph of $M$ is measurable.\\
	Assume there is a subset $\tilde{\Omega} \in \mathcal{F}$ with $\mathbb{P}( \tilde{\Omega} ) > 0$
	such that $M (\omega)  \neq \emptyset$ for all $\omega \in \tilde{\Omega}$.
	Define
	\begin{align*}
		\tilde{M} (\omega) := \begin{cases} 
			M(\omega ) & \textrm{if } \omega \in \tilde{\Omega} \\
			\hat{c} & \textrm{else}.	\end{cases}		
	\end{align*}
	Then the graph of $M$ is in $\mathcal{F} \times \mathcal{B}(X)$ and hence in 
	$\bar{\mathcal{F}} \times \mathcal{B}(X)$.
	Note that $\bar{\mathcal{F}}$ is closed under the Souslin operation 
	(see \cite[Example 3.5.20 and Theorem 3.5.22]{Srivastava1998}). 
	Hence, \cite[Corollary of Theorem 7]{Leese1974} 
	(see also the survey by Wagner \cite[Theorem 3.4]{Wagner1980}) implies the existence of 
	a $\bar{\mathcal{F}}$-measurable selection 
	$x (\omega) =(x_\infty (\omega), x_1 (\omega), x_2 (\omega),\dots) \in \tilde{M} (\omega)$.
	The set $\bigcup_{k \in \mathbb{N} \cup \left\{\infty \right\} }  \left\{ x_k (\omega) \right\}$
	is sequentially compact for each $\omega \in \Omega$.
	By the same arguments as in \cite[Proposition 2.15]{Crauel2003},
	there exists some deterministic compact set $\tilde{K} \subset X$ such that 
	\begin{align*}
		\bar{\mathbb{P}} \left( x_k (\omega) \in \tilde{K} \textrm{ for all } k 
		\in \mathbb{N} \cup \left\{ \infty \right\} \right) 
		> 1 - \mathbb{P}( \tilde{\Omega} ) .
	\end{align*}
	Using the definition of $\tilde{\Omega}$ and $\hat{M}$ it follows that
	\begin{align*}
		\bar{\mathbb{P}} \left( x(\omega) \in M (\omega) \textrm{ and } x_k (\omega) \in \tilde{K} 
		\textrm{ for all } k 
		\in \mathbb{N} \cup \left\{ \infty \right\} \right) 
		>0 .
	\end{align*}
	This contradicts the fact that the pullback attractor attracts $\tilde{K}$ almost surely.
	Hence, $M(\omega) = \emptyset$ almost surely. 
	Using pullback continuity of $\varphi$, it follows that
	there exists some $\hat{\Omega} \in \mathcal{F}$ with $\mathbb{P}(\hat{\Omega})=1$
	such that for any $\omega \in \hat{\Omega}$ and $(x_\infty , x_1, x_2, \dots) \in \hat{c}$,
	\begin{align}
	\label{convergent_sequence}
		\lim_{t \rightarrow \infty} \sup_{k \in \mathbb{N} \cup \left\{ \infty \right\}} 
		d(\varphi_t ( \theta_{-t} \omega, x_k), A(\omega)) = 0.
	\end{align}
	Now, assume there exists some compact set $K$, $\varepsilon >0$, 
	$\omega \in \hat{\Omega}$ and sequence $t_m$ going to infinity such that 
	$ \varphi_{t_m} (\theta_{-t_m} \omega, K) \not\subset A(\omega)^\varepsilon$ for all $m\in \mathbb{N}$.
	Hence, there are $y_m \in K$ such that 
	$ \varphi_{t_m} (\theta_{-t_m} \omega, y_m) \not\in A(\omega)^\varepsilon$ for all $m\in \mathbb{N}$.
	Since $K$ is compact, there is a convergent subsequence $y_{m_k}$ with 
	$ y_\infty := \lim_{k \rightarrow \infty} y_{m_k}$
	and $(y_\infty, y_{m_1}, y_{m_2}, \dots) \in \hat{c}$ which is a contradiction to \eqref{convergent_sequence}.
\end{proof}

\begin{remark}
	The statement of Lemma \ref{omega-wise_convergence} remains true for 
	pullback attractors of RDS in discrete time.
\end{remark}

\begin{lemma}
	\label{step3_pullback_attractor}
	Let $A$ be the pullback attractor of the RDS $\varphi$. For $\delta >0$ 
	there exist compact sets $K_n \subset X$ and $t_n \geq 0$, $n \in \mathbb{N}$ such that
	\begin{align*}
		\mathbb{P} \left( \varphi_{t_n} \left( \theta_{-t_n} \omega, K_n \right) \supset A( \omega)
		\textrm{ and } \varphi_t \left( \theta_{-t} \omega, K_n \right) \subset A (\omega)^\frac{1}{n} 
		\textrm{ for all } t\geq t_n ,  n \in \mathbb{N} \right) \geq 1- \delta.
	\end{align*}
\end{lemma}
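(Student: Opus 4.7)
The plan is to pick compact sets $K_n$ large enough to contain the time-shifted attractor $A(\theta_{-t_n}\omega)$ with high probability, and then pick $t_n$ large enough that $K_n$ is pullback-attracted into $A(\omega)^{1/n}$ by time $t_n$ with high probability. The $\varphi$-invariance of $A$ then delivers the forward containment essentially for free.

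First I would use tightness of the random compact set $A$: for every $\varepsilon > 0$ there exists a deterministic compact $C \subset X$ with $\mathbb{P}(A(\omega) \subset C) \geq 1 - \varepsilon$. (This is the same standard fact underlying \cite[Proposition 2.15]{Crauel2003} cited in the proof of Lemma \ref{omega-wise_convergence}: the distribution of $A$ on the Polish space of nonempty compact subsets of $X$ is tight in the Hausdorff topology, and the union of a Hausdorff-compact family of compact subsets of $X$ has compact closure.) Applying this with $\varepsilon = \delta/2^{n+2}$ produces a compact $K_n \subset X$ with $\mathbb{P}(A(\omega) \subset K_n) \geq 1 - \delta/2^{n+2}$; by $\theta$-invariance of $\mathbb{P}$, the same bound holds with $A(\theta_{-t}\omega)$ in place of $A(\omega)$ for every fixed $t \geq 0$.

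Second, the defining attraction property of the pullback attractor, applied to the fixed compact set $K_n$, gives that the random time
\[
\tau_n(\omega) := \inf\{ s \geq 0 : \varphi_t(\theta_{-t}\omega, K_n) \subset A(\omega)^{1/n} \text{ for all } t \geq s \}
\]
is almost surely finite, so a sufficiently large deterministic $t_n$ satisfies $\mathbb{P}(\tau_n \leq t_n) \geq 1 - \delta/2^{n+2}$. On the event $\{A(\theta_{-t_n}\omega) \subset K_n\}$, $\varphi$-invariance of $A$ gives
\[
\varphi_{t_n}(\theta_{-t_n}\omega, K_n) \supset \varphi_{t_n}(\theta_{-t_n}\omega, A(\theta_{-t_n}\omega)) = A(\omega),
\]
while on $\{\tau_n \leq t_n\}$ we have $\varphi_t(\theta_{-t}\omega, K_n) \subset A(\omega)^{1/n}$ for all $t \geq t_n$. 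A union bound over these two bad events yields probability at most $\delta/2^{n+1}$ for a failure at index $n$, and summing over $n \in \mathbb{N}$ gives total bad probability at most $\delta/2 \leq \delta$, as required.

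The only obstacle to watch out for is a possible circular choice of $K_n$ and $t_n$ — a priori, enlarging $K_n$ to improve containment could seem to demand a larger $t_n$ for attraction, and vice versa — but this is not actually a problem, since by stationarity the containment probability $\mathbb{P}(A(\theta_{-t_n}\omega) \subset K_n)$ is independent of $t_n$, so $K_n$ can be fixed first and $t_n$ chosen afterwards. Measurability of the two events involved is routine, given joint measurability of $\varphi$, sectional continuity of $x \mapsto \varphi_t(\omega,x)$, and the random-compact-set structure of $A$.
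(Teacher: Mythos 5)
Your proposal is correct and follows essentially the same route as the paper: choose $K_n$ by tightness of the law of the random compact set $A$ (the paper's appeal to \cite[Proposition 2.15]{Crauel2003}), choose $t_n$ from the pullback attraction of the deterministic compact $K_n$, obtain the containment $\varphi_{t_n}(\theta_{-t_n}\omega,K_n)\supset A(\omega)$ from $\varphi$-invariance of $A$ together with $\theta$-invariance of $\mathbb{P}$, and finish with a union bound over $n$. The only differences are cosmetic (your constants $\delta/2^{n+2}$ versus the paper's $\delta/2^{n+1}$, and your explicit remark dispelling the apparent circularity in the order of choosing $K_n$ and $t_n$).
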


\begin{proof}
	Let $n \in \mathbb{N}$. 
	By \cite[Proposition 2.15]{Crauel2003} there exists some compact set $K_n \subset X$ such that
	\begin{align}
		\label{compact_set}
		\mathbb{P} \left( A (\omega) \subset K_n \right) \geq 1- \frac{\delta}{2^{n+1}}.
	\end{align}
	The definition of the pullback attractor implies that there exists some $t_n >0$ such that
	\begin{align}
		\label{time_t^ast}
		\mathbb{P} \left( \varphi_t \left( \theta_{-t} \omega, K_n \right) \subset A (\omega)^\frac{1}{n}
		\textrm{ for all } t \geq t_n \right) \geq 1 - \frac{\delta}{2^{n+1}}.
	\end{align}
	By $\varphi$-invariance of $A$, $\theta$-invariance of $\mathbb{P}$ and \eqref{compact_set} it follows that
	\begin{align*}
		\mathbb{P} \left( \varphi_{t_n} \left( \theta_{- t_n} \omega, K_n \right) \supset A(\omega) \right)
		\geq 1 - \frac{\delta}{2^{n+1}}.
	\end{align*}
	Combining this estimate and \eqref{time_t^ast}, we conclude
	\begin{align*}
		\mathbb{P} \left( \varphi_{t_n} \left( \theta_{-t_n} \omega, K_n \right) \supset A( \omega)
		\textrm{ and } \varphi_t \left( \theta_{-t} \omega, K_n \right) \subset A (\omega)^\frac{1}{n} 
		\textrm{ for all } t\geq t_n \right) \geq 1- \frac{\delta}{2^n}
	\end{align*}
	which implies the claim.
\end{proof}

\begin{theorem}
	\label{pullback_attractor_connected}
	Let $X$ be a connected Polish space and $\varphi$ be a pullback continuous RDS. If there exists a pullback 
	attractor $A$, then $A$ is almost surely connected.
\end{theorem}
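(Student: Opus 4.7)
The plan is to argue by contradiction, assuming $\mathbb{P}(A(\omega) \text{ is disconnected}) > 0$, and then to construct, for a suitably chosen $\omega$, a locally constant map $f_n : X \to \{1,2\}$ that must be constant on the connected space $X$ but provably takes both values.

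Fix a small $\delta > 0$ and let $\Omega^*$ be the event in Lemma \ref{step3_pullback_attractor}, so $\mathbb{P}(\Omega^*) \geq 1 - \delta$, and let $\hat{\Omega}$ be the full-measure event from Lemma \ref{omega-wise_convergence}. For $\delta$ small enough, the intersection $\Omega^* \cap \hat{\Omega} \cap \{A(\omega) \text{ is disconnected}\}$ still has positive probability. I would fix some $\omega$ in this intersection, decompose $A(\omega) = A_1 \sqcup A_2$ into two non-empty compact parts at positive distance $2\varepsilon > 0$, and choose $n \in \mathbb{N}$ with $1/n < \varepsilon$ so that $A(\omega)^{1/n} = A_1^{1/n} \sqcup A_2^{1/n}$ is a disjoint union of two open sets.

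For any $x \in X$, Lemma \ref{omega-wise_convergence} applied to the singleton $\{x\}$ gives some $T(x)$ such that $\varphi_t(\theta_{-t}\omega, x) \in A(\omega)^{1/n}$ for all $t \geq T(x)$; by pullback continuity of $t \mapsto \varphi_t(\theta_{-t}\omega,x)$, the trajectory must stay in a single one of the two open pieces on $[T(x),\infty)$, which defines $f_n(x) \in \{1,2\}$. To show $f_n$ is locally constant, let $x_m \to x$ and apply Lemma \ref{omega-wise_convergence} to the compact set $K := \{x\} \cup \{x_m : m \in \mathbb{N}\}$ to obtain a single $T$ with $\varphi_t(\theta_{-t}\omega, K) \subset A(\omega)^{1/n}$ for $t \geq T$; continuity of $\varphi_T(\theta_{-T}\omega,\cdot)$ in the spatial variable then forces $\varphi_T(\theta_{-T}\omega, x_m)$ to lie in the same component as $\varphi_T(\theta_{-T}\omega, x)$ for large $m$, so $f_n(x_m) = f_n(x)$ eventually. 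Since $X$ is connected, $f_n$ is constant.

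The contradiction comes from the first property in Lemma \ref{step3_pullback_attractor}: $\varphi_{t_n}(\theta_{-t_n}\omega, K_n) \supset A(\omega)$, so there exist $x, y \in K_n$ with $\varphi_{t_n}(\theta_{-t_n}\omega, x) \in A_1$ and $\varphi_{t_n}(\theta_{-t_n}\omega, y) \in A_2$; the second property guarantees that both trajectories remain in $A(\omega)^{1/n} = A_1^{1/n} \sqcup A_2^{1/n}$ for all $t \geq t_n$, so by continuity in $t$ they stay in their starting components, yielding $f_n(x) = 1 \neq 2 = f_n(y)$. The main obstacle is the joint coupling of all the required events on a single $\omega$: one needs absorption by the deterministic $K_n$ simultaneously for every $n$ (delivered by Lemma \ref{step3_pullback_attractor} in its "for all $n$" form) together with uniform pullback convergence over a compact set that is itself chosen depending on $\omega$ (via $n$ and the approximating sequence $x_m$), which is exactly why Lemma \ref{omega-wise_convergence} — the independence of the exceptional nullset from the compact set $K$ — is indispensable.
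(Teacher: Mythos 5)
Your proposal is correct and follows essentially the same route as the paper: the locally constant map $f_n:X\to\{1,2\}$ is just a repackaging of the paper's open partition $X=X_1\sqcup X_2$, and you invoke Lemma \ref{omega-wise_convergence} (for the $\omega$-independent exceptional set and for local constancy via the compact set $\{x\}\cup\{x_m\}$), Lemma \ref{step3_pullback_attractor} (for surjectivity onto $\{1,2\}$), and pullback continuity (for connectedness of the tail trajectories) in exactly the same places. No gaps.
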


\begin{proof}
	Assume $A$ is not connected with positive probability. 
	By Lemma \ref{omega-wise_convergence} and \ref{step3_pullback_attractor} 
	we can choose $\tilde{\Omega} \in \mathcal{F}$ 
	with $\mathbb{P}(\tilde{\Omega}) >0$, compact sets $K_n \subset X$ and a sequence $t_n$ 
	such that for any $\omega \in \tilde{\Omega}$, $n\in \mathbb{N}$ and compact set $K \subset X$ it holds that
	\begin{itemize}
		\item $A(\omega)$ is not connected, 
		\item $ \lim_{t \rightarrow \infty} \sup_{x \in K} d(\varphi_t ( \theta_{-t} \omega, x), A(\omega)) = 0$,
		\item $\varphi_{t_n} \left( \theta_{-t_n} \omega, K_n \right) \supset A( \omega)$ and
			$\varphi_t \left( \theta_{-t} \omega, K_n \right) \subset A (\omega)^\frac{1}{n}$ for all $t\geq t_n$.
	\end{itemize}
	Fix $\omega \in \tilde{\Omega}$. For this fixed $\omega$ we will follow the idea of the proof in the
	deterministic case (see \cite[Theorem 3.1]{Gobbino1997}). Note however that Step 3 below requires some extra argument in our case. \\
	\emph{Step 1:}
	Let $A(\omega) = A_1  \cup A_2$, where $A_1 $
	and $A_2 $ are nonempty, disjoint, compact sets. There exists some $\varepsilon > 0$ such that
	$A_1 ^{\varepsilon} \cap A_2 ^{\varepsilon} = \emptyset$. Define
	\begin{align*}
		X_1 := \left\{ x \in X : \textrm{there exists some } t \textrm{ such that }
			\varphi_s ( \theta_{-s} \omega,x) \in A_1 ^{\varepsilon }
			\textrm{ for all } s \geq t \right\} \; \, \\
		X_2 := \left\{ x \in X : \textrm{there exists some } t \textrm{ such that }
			\varphi_s ( \theta_{-s} \omega,x) \in A_2 ^{\varepsilon }
			\textrm{ for all } s \geq t \right\}.
	\end{align*}
	If we show that $X_1$ and $X_2 $ are disjoint nonempty open sets with 
	$X_1 \cup X_2 = X$, then we found a contradiction to $X$ being connected.
	Obviously, $X_1 \cap X_2 = \emptyset$. \\
	\emph{Step 2:}
	We show that $X_1 \cup X_2 = X$.\\
	Let $x \in X$. By definition of $\tilde{\Omega}$, there exists some $t > 0$ such that
	$\varphi_s (\theta_{-s} \omega,x) \in A(\omega) ^{\varepsilon }$ for all $s \geq t $.
	Define
	\begin{align*}
		S_{t} := \left\{ \varphi_s (\theta_{-s} \omega,x) : s \geq t \right\}.
	\end{align*}
	Then, $S_{t}  \subset A(\omega)^{\varepsilon }$ and $S_{t} $ is connected
	by pullback continuity. Therefore, $S_{t} $ is either totally contained in 
	$A_1^{\varepsilon }$ or totally contained in $A_2^{\varepsilon }$.\\
	\emph{Step 3:}
	We show that $X_i \neq \emptyset$ for $i = 1,2$. \\
	Let $n \in \mathbb{N}$ with $\frac{1}{n} \leq \varepsilon$.
	By definition of $\tilde{\Omega}$, 
	$\varphi_{t_n} \left( \theta_{-t_n} \omega, K_n \right) \supset A( \omega)$ and
	$\varphi_t \left( \theta_{-t} \omega, K_n \right) \subset A (\omega)^\varepsilon$ for all $t\geq t_n$
	for some $n \in \mathbb{N}$.
	Hence, there exists $x  \in K_n \subset X$ such that 
	$\varphi_{t_n} \left( \theta_{-t_n} \omega, x \right) \in A_i$. By continuity 
	in time, $\varphi_{t} \left( \theta_{-t} \omega, x \right) \in A_i^\varepsilon $
	for all $t \geq t_n$. \\
	\emph{Step 4:}
	We show that $X_i$ is open for $i=1,2$. \\
	Assume that $X_i$ is not open. Then, there exists an $x \in X_i$, a sequence $x_k$ converging to $x$ 
	and a sequence $s_k$ converging to infinity such that 
	$\varphi_{s_k} \left( \theta_{-s_k} \omega, x_k \right) \notin A_i ^\varepsilon$ for all $k \in \mathbb{N}$.
	By definition of $\tilde{\Omega}$, there exists some $s > 0$ such that
	$\varphi_t (\theta_{-t} \omega,x_k) \in A(\omega) ^{\varepsilon }$ for all $k \in \mathbb{N}$ and $t \geq s $. 
	Since $x \in X_i$, $x_k$ is converging to $x$ 
	and $\varphi$ is continuous in the state space, there exists some $k^\ast$ such that
	$\varphi_s (\theta_{-s} \omega,x_k) \in A_i ^{\varepsilon }$ for $k \geq k^\ast$. 
	Using pullback continuity, it follows that
	$\varphi_t (\theta_{-t} \omega,x_k) \in A_i ^{\varepsilon }$ for $t \geq s$ and $ k \geq k^\ast$ which is a 
	contradiction to the definition of $x_k$.
\end{proof}

\section{Weak attractor}
\label{weak_attractor_section}

The question arises whether the result in the previous section can be extended to weak attractors. 
In contrast to pullback attractors, convergence to weak attracors is merely in probabilty.
We give an example of an RDS where the weak attractor is not connected.
In addition to the assumption on the RDS and state space of Section \ref{pullback_section}, this example has a
jointly continuous RDS, a path-connected state space and every bounded set converges to the
attractor.

\begin{example}
	\emph{Step 1: The metric space.}
	We choose the same metric space as in \cite[Remark 5.2]{Gobbino1997}.
	Set $s_n = \sum_{i=0}^{n} 2^{-i}$ for $n \in \mathbb{N}_0$. 
	Let us consider the following sets in $\mathbb{R}^2$:
	\begin{align*}
		P_{-\infty} &:= (-1,0), \quad   P_{\infty} := (2,0),\\
		P_n &:= (s_{n-1},0), \quad P_{-n} := (1-s_n,0), \\
		X_n^L &:= \left\{ (x,y) \in \mathbb{R}^2 :  x = s_{n-1} + \lambda \, 2^{-n-1} \textrm{ and } 
			y= \lambda \, 2^{-n}  \textrm{ for some } \lambda \in [0,1] \right\}, \\
		X_n^R &:= \left\{ (x,y) \in \mathbb{R}^2 :  x = s_{n-1} + (2- \lambda) \, 2^{-n-1} \textrm{ and } 
			y= \lambda \, 2^{-n}  \textrm{ for some } \lambda \in [0,1] \right\} , \\
		X_{-n}^L &:= \left\{ (x,y) \in \mathbb{R}^2 :  x = 1-s_n + \lambda \, 2^{-n-1} \textrm{ and } 
			y= \lambda \, 2^{n}  \textrm{ for some } \lambda \in [0,1] \right\}, \\
		X_{-n}^R &:= \left\{ (x,y) \in \mathbb{R}^2 :  x = 1- s_n + (2- \lambda) \, 2^{-n-1} \textrm{ and } 
			y= \lambda \, 2^{n}  \textrm{ for some } \lambda \in [0,1] \right\} , \\
		X_{-\infty} &:= \left\{ (-1,y) \in \mathbb{R}^2 : y \geq 0 \right\}, \\
		Y &:= \left\{ (x,y) \in \mathbb{R}^2 : y \leq 0, \left( x - 0.5 \right)^2 +y^2 = 2.25 \right\}
	\end{align*}
	and
	\begin{align*}
		X_z := X_z^L \cup X_z^R
	\end{align*}
	for $n \in \mathbb{N}_0$ and $z \in \mathbb{Z}$. 
	The sets $X_z$ are the two equal sides of isosceles triangles in the halfplane 
	with base $P_z P_{z+1}$ and height $2^{-z}$. The left- respectively right-hand
	side of $X_z$ is denoted by $X_z^L$ respectively $X_z^R$.
	Finally we define the complete metric space
	\begin{align*}
		X:= \bigcup_{z \in \mathbb{Z}}^\infty X_n \cup X_{-\infty} \cup Y
	\end{align*}
	\begin{figure}[ht]
		\center
		\includegraphics[height=6cm]{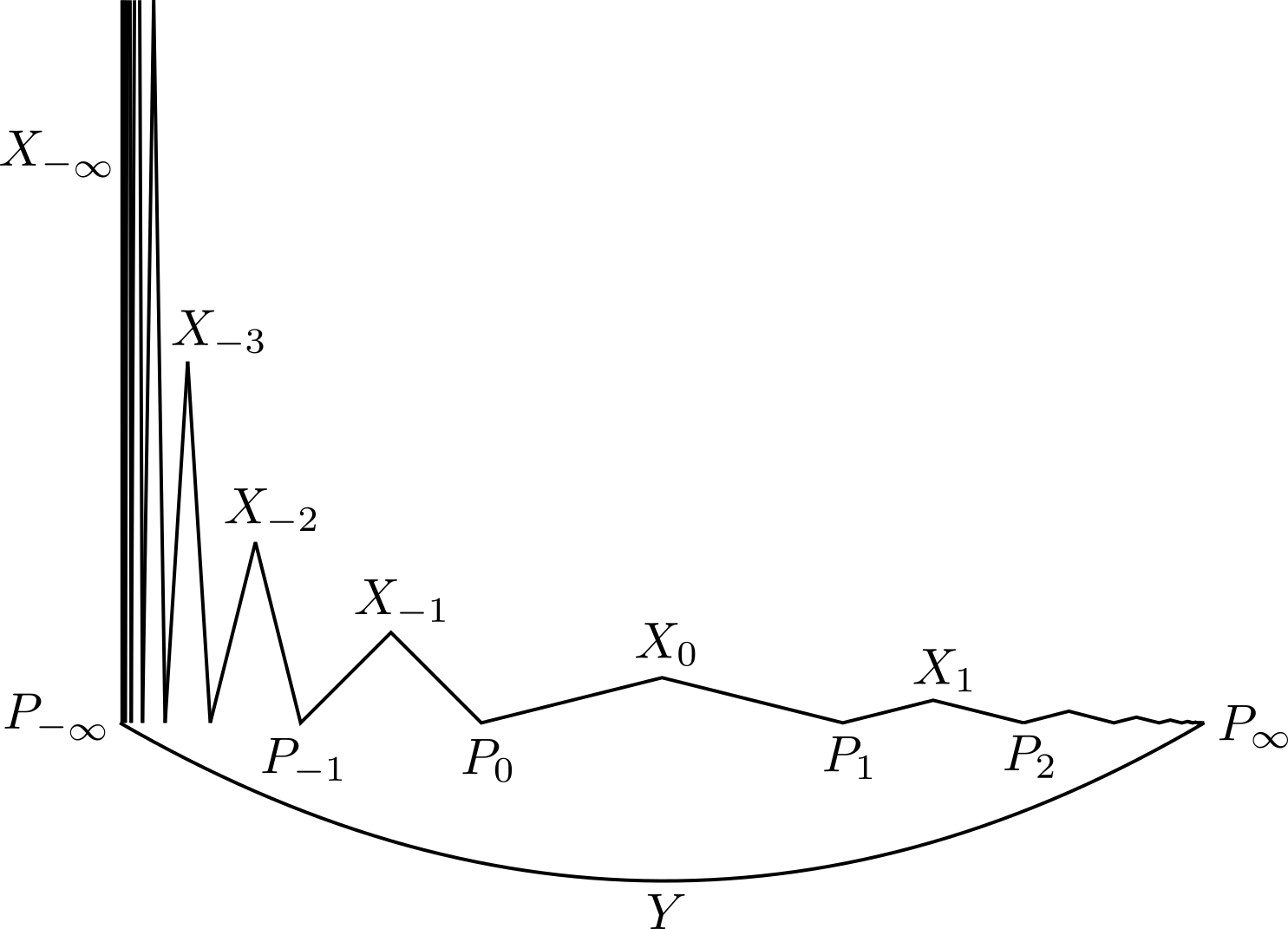}
		\caption{bounded subset of X}
	\end{figure}
	with the metric induced by $\mathbb{R}^2$.\\
	\emph{Step 2: The dynamics.}
	We characterize the dynamics by phases of length one. To each phase there corresponds a random variable
	$\xi_m$ where $( \xi_m)_{m\in \mathbb{Z}} $ is a sequence of 
	independent identically distributed random variables with
	$\mathbb{P} \left( \xi_0 = k \right) = 2^{-k}$ for $k \in \mathbb{N}$.
	In a phase with corresponding $\xi_m = k$ all points to the right of $P_{-(k+1)! +1}$ get pushed
	$k!$ triangles to the right 
	and all points on the lower half of the triangles to the left of $P_{-(k+1)!}$ decrease their height. \\
	We describe the dynamics during a phase by a function 
	$f: \left\{ 0 \leq s \leq t \leq1 \right\} \times \mathbb{N} \times X \mapsto X$. Let $f$ be such that
	\begin{itemize}
		\item
			$P \mapsto f_{0,t} (k,P)$ is bijective
		\item
			$f_{s,t} = f_{0,t} \circ f_{0,s}^{-1}$
		\item 
			$(s,t) \mapsto f_{s,t} (k,P) $ is continuous
		\item
			if $z \geq -(k+1)!$ and $P = (x,y) \in X_z^R $, then $f_{0,1} (k,P) \in 
			\left\{	(\tilde{x}, \tilde{y}) \in X_{z +k!}^R : \tilde{y} = 2^{-k!} y \right\}$
		\item
			if $z \geq -(k+1)!+1$ and $P = (x,y) \in X_z^L $, then $f_{0,1} (k,P) \in 
			\left\{	(\tilde{x}, \tilde{y}) \in X_{z +k!}^L : \tilde{y} = 2^{-k!} y \right\}$
		\item 
			if $z \geq -(k+1)!+1$ and $P \in X_{z-1}^R \cup X_z^L$, then
			$ \left| f_{0,t}(k,P) - f_{0,t} (k,P_z) \right| \leq \left| P - P_z \right|$
		\item
			if $z \leq -(k+1)!$ and $P=(x,y) \in X_{z}^L$ with $ y\leq 2^{-z-1}$, then 
			\linebreak $f_{0,t} (k,P) \in 
			\left\{	(\tilde{x}, \tilde{y}) \in X_{z}^L : \tilde{y} = 2^{-t}y \right\}$
		\item
			if $z \leq -(k+1)! -1$ and $P=(x,y) \in X_{z}^R$ with $ y\leq 2^{-z-1}$, then 
			\linebreak $f_{0,t} (k,P) \in 
			\left\{	(\tilde{x}, \tilde{y}) \in X_{z}^R : \tilde{y} = 2^{-t}y \right\}$
		\item
			if $P,Q \in X_z^L$ or $P,Q \in X_z^R$ for $z \in \mathbb{Z}$, then 
			$ \left| f_{s,t}(k,P) - f_{s,t} (k,Q) \right| \leq 4(k!+1) \left| P - Q \right|$
		\item
			if $P \in X_{-\infty}$ and $P=(-1,y)$, then $f_{0,t} (k, P) = (-1, 2^{-t}y)$
		\item
			if $P \in Y$, then $f_{0,t}(k,P)=P$.
	\end{itemize}
	Then, $t \mapsto f_{s,t}(\xi_m,P)$ describes the dynamics of the system 
	started in a point $P$ at time $s$  in a phase with corresponding random variable $\xi_m$. 
	Since  $(s,t) \mapsto f_{s,t}(k,P)$ is continuous and $P \mapsto f_{s,t}(k,P)$ is Lipschitz continuous with
	Lipschitz constant depending on $k$, the map
	$(s,t,P) \mapsto f_{s,t}(k,P)$ is continuous. \\
	In the following steps we show that the weak attractor of this system exists  and is not connected. \\
	\emph{Step 3: Attractor of discrete-time system.}
	Let $r \in \mathbb{N}$ be arbitrary. Define the bounded set 
	$K_r := \left\{ (x,y) \in X : y \leq 2^{r} \right\}$ and
	the neighborhood 
	$U_r = \left\{  (x,y) \in X:  y \leq 2^{-r} \right\}$ of $ \,\bigcup_{z \in \mathbb{Z}} P_z \cup Y$.
	Consider the discrete-time system generated by the iterated functions  
	$(f_{0,1}(\xi_m, \cdot))_{m \in \mathbb{Z}}$. If $\xi_m \geq k$ for some phase with $k!\geq 2r$,
	then the process started in $\bigcup_{z=-r}^{\infty} X_z \cap K_r$  
	stays in $U_r$ after this phase. Running $2r$ phases, all points in 
	$ K_r \cap \left( \bigcup_{i=r+1}^\infty X_{-i}  \cup X_{-\infty} \right)$ 
	decrease their height and reach $U_r$.
	Therefore, after $2r$ phases where at least one corresponding $\xi_m \geq k$ with $k!\geq 2r$
	the discrete-time process started in $K_r$ is in $U_r$. In contrast to the continuous-time process, the 
	discrete-time process cannot leave $U_r$ afterwards. By \cite[Theorem 3.4]{Crauel2001}, 
	there exists a pullback attractor of the discrete-time process and this 
	attractor is a subset of $ \bigcup_{z \in \mathbb{Z}} P_z \cup Y$.
	For $n \in \mathbb{N}$ define 
	\begin{align*}
		F_n (\xi_{-1}, \xi_{-2},  \dots, \xi_{-n}) := 
		f_{0,1} (\xi_{-1}, \cdot) \circ f_{0,1} (\xi_{-2}, \cdot) \circ \dots \circ 
		f_{0,1} (\xi_{-n}, \bigcup_{z \in \mathbb{Z}} P_z) \subset \bigcup_{z \in \mathbb{Z}} P_z .
	\end{align*}	 
	By definition of the pullback attractor, $F_n (\xi_{-1}, \xi_{-2},  \dots, \xi_{-n})$ converges to the
	pullback attractor as $n$ goes to infinity $\mathbb{P}$-almost surely. 
	Therefore, $P_0 \in F_n$ for large enough $n$ implies that $P_0$ is in the attractor as well.
	The point $P_0$ is not in $F_n$ iff there exist $k \in \mathbb{N}$ and 
	times $-n \leq t_0 < t_1 < \dots < t_k <0$ 
	such that $\xi_{t_i}=k$ for all $ 0 \leq i \leq k$ and $\xi_s \leq k$ for all $ t_0 \leq s <0$. 
	Then,
	\begin{align*}
		\mathbb{P}( P_0 \textrm{ is in the attractor}) &= 
		\lim_{n\rightarrow \infty} \mathbb{P} \left(P_0 \in F_n (\xi_{-1}, \xi_{-2},  \dots, \xi_{-n}) \right) \\
		&\geq 1 - \sum_{k \in \mathbb{N}} \mathbb{P} \left( \xi_0 = k | \xi_0 \geq k \right)^{k+1} 
		= \frac{1}{2}
	\end{align*}
	which implies that the pullback attractor is not connected with positive probability.
        More generally, the attractor is not connected if there exists an $m \geq 0$ such that 
	for all $n \in \mathbb{N}$ the point $P_0 \in F_n (\xi_{-m-1}, \xi_{-m-2},  \dots, \xi_{-m-n}) $. 
	This event is in the terminal sigma algebra.
	By Kolmogorov's zero-one law, the pullback attractor of the discrete-time system 
	is almost surely not connected.
	\\
	\emph{Step 4: Attractor of continuous-time system.}
	When we consider the continuous-time system we need to add a random phase shift which is uniformly distibuted on $[0,1)$. 
	For $0 \leq s,t <1$ and $n \in \mathbb{N}$, the system started in a point $P$ at time $s$ of a phase
	is described by
	\begin{align*}
		\varphi_{-s+n+t} ( \omega ,P) 
		= f_{0,t}( \xi_n, \cdot) \circ f_{0,1}( \xi_{n-1}, \cdot) \circ \dots \circ f_{0,1}( \xi_{1}, \cdot)
			\circ f_{s,1}( \xi_{0}, P).
	\end{align*}	
	with $\omega =  (s, (\xi_m)_{m \in \mathbb{Z}})  \in [ 0,1 ) \times \mathbb{N}^{\mathbb{Z}}=: \Omega$
	and canonical shift on $\Omega$ and the basic probability measure on $\Omega$ is the product of 
        Lebesgue measure on $[0,1)$ and the laws of   $(\xi_m)_{m \in \mathbb{Z}}$.  
	Then, $\varphi$ is a jointly continuous RDS as a composition of jointly continuous maps.
	\\
	Let $r \geq 2$. 
	If we start in a set $K_r$ as in Step $3$  in an incomplete phase with 
	corresponding $\xi_m \leq r$, then at the end of this phase the process is still in $K_{r}$. 
	The pullback attractor of the discrete-time system attracts this bounded set. 
	Hence, there exists a time $n_r \in \mathbb{N}$ 
	such that the discrete process started in $K_{r}$ 
	stays in a ball around the discrete-time attractor with radius $2^{-(r+1)!}$ 
	after time $n_r$ with probability $1-2^{-r}$. 
	\\
	We extend the discrete-time attractor to continuous time in such a way 
	that the so constructed random set stays strictly invariant under the given dynamics. 
	If one starts the end phase in a ball around the discrete-time attractor with radius $2^{-(r+1)!}$,
	one can leave the ball around the invariantly extended random set with radius $2^{-(k+1)!}$ 
	only during a phase with corresponding $\xi_m \geq r$. 
	\\
	Combining these three parts, the continuous-time process started in $K_r$ at time $t \geq n_r +1$
	is in a ball around the discrete-time attractor with radius $2^{-(r+1)!}$ with
	probability $1 - 2^{-r+1}$.
	\\
	This probability tends to one as $r$ goes to infinity.
	Therefore, the continuous-time extension of the discrete-time attractor is the weak attractor of
	the continuous-time system. By construction, the weak attractor of the continuous system is almost surely
	not connected. Note that the weak attractor will not almost surely be contained in the set $\bigcup_{z \in \mathbb{Z}} P_z \cup Y$.
\end{example}

\begin{remark}
	If every compact set in $X$ can be covered by a connected compact set, 
	then the weak attractor is connected. 
	This follows by the same arguments as in \cite[Proposition 3.7]{Crauel2001} 
	where this result was stated for the pullback attractor.
	Here, one does not need to assume continuity in time. \\
	This assumption is in particular satisfied for an attractor that attracts bounded sets in probability
	on a connected and locally connected Polish space.
	By local connectedness, a compact set can be covered by finitely many open connected sets. 
	Since a connected and locally connected Polish space is also path-connected 
	(see Mazurkiewicz-Moore-Menger theorem in \cite[p. 254, Theorem 1 and p. 253, Theorem 2]{Kuratowski1968}),
	one can connect these sets by paths.
\end{remark}

\bibliographystyle{plain}
\bibliography{mybib}

\end{document}